 \DeclareFontFamily{U}{wncy}{}
    \DeclareFontShape{U}{wncy}{m}{n}{<->wncyr10}{}
    \DeclareSymbolFont{mcy}{U}{wncy}{m}{n}
    \DeclareMathSymbol{\Sha}{\mathord}{mcy}{"58} 
\theoremstyle{plain}
\newtheorem{theorem}{Theorem}[section]
\newtheorem{prop}[theorem]{Proposition}
\newtheorem{lemma}[theorem]{Lemma}
\newtheorem{const}[theorem]{Construction}
\newtheorem*{theoremintro*}{Theorem}
\newtheorem*{maintheoremintro*}{Main Theorem}
\newtheorem*{propintro*}{Proposition}
\theoremstyle{definition}
\newtheorem{example}[theorem]{Example}
\newtheorem{remark}[theorem]{Remark}
\newtheorem*{ack}{Acknowledgments}
\newcommand{\ind}{\text{ind }}
\DeclareMathOperator{\Br}{\mathrm{Br}}
\title[Odd $m$-invariants]{On possible values of the $m$-invariant}
\date{\today}
\author[Cassady]{Connor Cassady}
\thanks{\textit{Mathematics Subject Classification} (2010): 11E04, 11E81.
\\ 
\textit{Key words and phrases}: quadratic forms, universal quadratic forms, field invariants, function fields of quadratic forms. \\
}
\begin{document}

\maketitle

\begin{abstract}
We show that every positive integer different from $3$ and $5$ can be realized as the $m$-invariant of a field.
\end{abstract}

\section*{Introduction}
Certain field invariants are useful in the study of quadratic forms over fields of characteristic unequal to $2$ since they give a measure of the complexity of quadratic forms over the field. One such invariant that has been well-studied is the $u$-\textit{invariant} of a field $k$, denoted by $u(k)$ and defined to be the maximal dimension of an anisotropic quadratic form over $k$ \cite[Definition~XI.6.1]{lam}. On the other hand, the $m$-\textit{invariant} of $k$, denoted by $m(k)$ and defined in \cite{m-inv} to be the minimal dimension of an anisotropic universal quadratic form over~$k$, has received much less attention thus far. In this article we are motivated by determining the possible values of the $m$-invariant. That is, given an integer $n \geq 1$, is there a field $k$ with $m(k) = n$?

The problem of determining the possible values of the $u$-invariant has been extensively studied. We know that $u(k) \ne 3, 5,$ or $7$ for any field $k$ of characteristic $\ne 2$ (see, e.g., \cite[Proposition~XI.6.8]{lam}), and for each integer $n \geq 0$ one can easily find fields with $u$-invariant~$2^n$ (e.g., $u(\mathbb{C}) = 1$ and $u(\mathbb{C}((t_1))((t_2)) \cdots ((t_n))) = 2^n$). In fact, Kaplansky conjectured that if $u(k) < \infty$, then $u(k) = 2^n$ for some integer $n \geq 0$ \cite{kaplansky}. This conjecture was disproved by Merkurjev, who first constructed a field $k$ with $u(k) = 6$ \cite{mer}, and later showed that for each integer $n \geq 1$ there is a field $k_n$ with $u(k_n) = 2n$ \cite{merkurjev}. The question still remained if there is a field with odd $u$-invariant larger than~$1$. Izhboldin answered this question affirmatively by constructing a field with $u$-invariant 9 \cite{izh01}, and this result was later generalized by Vishik, who showed that for each integer $r \geq 3$ there is a field with $u$-invariant $2^r + 1$ \cite{vishik}. 

Much less is known about the possible values of the $m$-invariant. Like the $u$-invariant, for each integer $n \geq 0$ one can easily find fields with $m$-invariant $2^n$ (for example, ${m(\mathbb{C}) = 1}$ and $m(\mathbb{C}((t_1))((t_2)) \cdots ((t_n))) = 2^n$), and it is rather straightforward to show that $m(k) \ne 3$ or $5$ for any field $k$ of characteristic~${\ne 2}$ \cite[p.194, 1.1a)]{m-inv}. Furthermore, if $k$ is \textit{linked} (i.e., if the quaternion algebras over~$k$ form a subgroup of the Brauer group $\Br (k)$) then $m(k) \ne 7$ \cite[p.195, 1.1b)]{m-inv}. For all fields $k$ of characteristic $\ne 2$ we also know that $m(k) \leq u(k)$, and if $m(k) = u(k) < \infty$, then $m(k) = u(k) = 2^n$ for some integer $n \geq 0$ \cite{universal, m-inv}. So if $u(k)$ is finite and $m(k) \ne 2^n$ for any integer $n \geq 0$, then $m(k) < u(k)$. 

However, very few examples of fields whose $m$-invariant is not a power of two exist in the literature. To the author's knowledge, the first example of such a field was found by Hoffmann, who constructed a linked field $k$ with $m(k) = 6$ \cite[Proposition~4.3]{6-dim}. In \cite[Proposition~3.40]{refinements}, the author constructed a non-linked field with $m$-invariant 6. The goal of this article is to construct a number of other fields~$F$ with $m(F) \ne 2^n$ for any integer $n \geq 0$. The main result is the following (see Theorems \ref{any even m-inv}, \ref{odd m-inv 1}, and \ref{odd m-inv 2}).

\begin{maintheoremintro*}
For each positive integer $n \ne 3, 5$ there is a field $F$ of characteristic $\ne 2$ with $m(F) = n$.
\end{maintheoremintro*}

The structure of this article is as follows. We begin by recalling necessary background and introducing notation in Section \ref{background}. In Section \ref{preliminaries} we prove preliminary results about the isotropy behavior of quadratic forms over function fields of quadrics that will be useful in our constructions. In Section \ref{main construction} we describe the construction we use to build fields with prescribed $m$-invariants. In Section \ref{proof of the main theorem} we use this construction to prove the main theorem, showing that each positive integer different from $3$ and $5$ can be realized as the $m$-invariant of a field. Similar constructions have been used to build fields with other prescribed invariants in, e.g., \cite{becher, leep, hoff99, hornix, izh01, merkurjev, vishik}. The arguments in Section \ref{proof of the main theorem} (particularly those used when constructing fields of prescribed odd $m$-invariant) use highly technical results in quadratic form theory, so in Section \ref{main results} we give examples of alternative arguments for particular integers that use more concrete results regarding quadratic forms of specific dimensions. 

\section{Background and notation}
\label{background}
All of the fields considered will have characteristic different from $2$, and all quadratic forms (occasionally referred to just as forms) considered will be non-degenerate (or \textit{regular}). Our notation and terminology follows \cite{lam}, and we assume familiarity with basic notions of quadratic form theory (see, e.g., \cite[Chapter~I]{lam}). 

Let $q$ be a quadratic form over a field $k$ of characteristic $\ne 2$. We say that $q$ is \textit{universal} over $k$ if $q$ represents all non-zero elements of $k$; i.e., for each $a \in k^{\times}$ there is some $x$ such that $q(x) = a$. The \textit{Witt index} of $q$ will be denoted by $i_W(q)$, the \textit{anisotropic part} of $q$ will be denoted by~$q_{an}$, and $c(q)$ will denote the \textit{Witt invariant} of $q$, i.e., the Brauer class of the Clifford algebra~$C(q)$ of~$q$ if $\dim q$ is even or the Brauer class of the even part $C_0(q)$ of the Clifford algebra of $q$ if $\dim q$ is odd. The \textit{Schur index} of $q$ will be denoted by $\ind q$ and is defined to be the (Schur) index of $C_0(q)$ if $\dim q$ is odd or the (Schur) index of~$C(q)$ if $\dim q$ is even. For elements $a, b \in k^{\times}$, the quaternion algebra over $k$ with generators~$i, j$ such that $i^2 = a$, $j^2 = b$, and $ij = -ji$ will be denoted by $\left(\frac{a, b}{k}\right)$. For positive integers $n \geq 1$, $I^n(k)$ will denote the $n$-th power of the fundamental ideal $I(k)$ of even-dimensional quadratic forms in the Witt ring $W(k)$. For any field extension $K/k$,~$q_K$ will denote the form $q$ considered as a quadratic form over $K$.

Given any $a_1, \ldots, a_n \in k^{\times}$, the \textit{$n$-fold Pfister form} $\langle 1, a_1 \rangle \otimes \cdots \otimes \langle 1, a_n \rangle$ over $k$ will be denoted by $\langle \langle a_1, \ldots, a_n \rangle \rangle$. We say that a quadratic form $q$ over $k$ is a \textit{Pfister neighbor} if $(\langle a \rangle \otimes q) \perp p \simeq \varphi$ for some $a \in k^{\times}$, some quadratic form $p$ over $k$, and some Pfister form~$\varphi$ over $k$ with $\dim \varphi < 2 \dim q$. The symbol $\simeq$ denotes isometry of quadratic forms and $\perp$ denotes the orthogonal sum of two quadratic forms. The \textit{hyperbolic plane} $x^2 - y^2$ will be denoted by $\mathbb{H}$, and for any positive integer $r \geq 1$, $r\mathbb{H}$ denotes the orthogonal sum of $r$ copies of $\mathbb{H}$.

Our main construction utilizes function fields of quadratic forms. For a quadratic form~$q$ over a field $k$ with $\dim q \geq 2$ and $q \not\simeq \mathbb{H}$, the \textit{function field} of $q$, denoted by $k(q)$, is the function field of the projective variety defined by $q = 0$. The form $q$ is isotropic over~$k(q)$, $k(q)$ has transcendence degree $\dim q - 2$ over $k$, and $k(q)$ is a purely transcendental extension of~$k$ if and only if~$q$ is isotropic over~$k$ (see, e.g., \cite[p.463]{function fields}). For a family $\{q_{\alpha}\}$ of quadratic forms over~$k$ with $\dim q_{\alpha} \geq 3$ for all $\alpha$, the field $k(\{q_{\alpha}\})$ is the free compositum of the fields~$k(q_{\alpha})$ for all~$\alpha$ \cite[p.333]{lam}. 

At certain points of this article we will appeal to the \textit{splitting pattern} of a non-hyperbolic quadratic form~$q$ over $k$, whose definition we now recall. Let $q_0 = q_{an}$ and $k_0 = k$. For $j \geq 1$, inductively define $k_j = k_{j-1}(q_{j-1})$ to be the function field of~$q_{j-1}$ and let $q_j = \left((q_{j-1})_{k_j}\right)_{an}$, stopping when $\dim q_h \leq 1$. Since $\dim q_j < \dim q_{j-1}$, this process terminates after a finite number of steps, and we call this number $h$ the \textit{height} of $q$. Letting $i_j(q) = i_W((q_{j-1})_{k_j})$, the splitting pattern of $q$ is then given by $(i_1(q), \ldots, i_h(q))$. (Some authors instead call this sequence the \textit{incremental splitting pattern}, and they use the term ``splitting pattern" to refer to the sequence $(i_1(q), i_1(q) + i_2(q), \ldots, i_1(q) + \ldots + i_h(q))$.) Of particular interest is the \textit{first Witt index} $i_1(q)$ of an anisotropic quadratic form $q$ with $\dim q \geq 2$ over~$k$, i.e., the Witt index of~$q_{k(q)}$. 

\section{Preliminary results}
\label{preliminaries}
In this section we prove preliminary results about the isotropy behavior of quadratic forms over the function fields of certain quadratic forms. We begin with the following well-known result, which we use frequently throughout the remainder of the article.
\begin{lemma}
\label{anisotropic over purely transcendental}
Let $q$ be an anisotropic quadratic form over a field $L$ of characteristic $\ne 2$. Then $q$ remains anisotropic over all purely transcendental extensions of $L$.
\end{lemma}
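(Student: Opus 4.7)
The plan is to reduce the statement to the case of a single transcendental extension $L(t)$ and then derive a contradiction from a hypothetical nontrivial representation of zero by examining the top-degree coefficients of the polynomial representation.

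First I would handle the reduction. If $K/L$ is an arbitrary purely transcendental extension and $q$ were to have an isotropic vector over $K$, that vector would involve only finitely many of the transcendence basis elements, so I could assume $K = L(t_1, \ldots, t_n)$ with the $t_i$ algebraically independent over $L$. Writing $K = L(t_1, \ldots, t_{n-1})(t_n)$ and noting that $L(t_1, \ldots, t_{n-1})$ is itself purely transcendental over $L$, the claim follows by induction on $n$ once the single-variable case is settled.

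For the base case $K = L(t)$, I would argue by contradiction. Write $q \simeq \langle a_1, \ldots, a_m \rangle$ with $a_i \in L^\times$, and suppose there exist $g_1, \ldots, g_m \in L(t)$, not all zero, with $\sum a_i g_i^2 = 0$. Clearing a common denominator produces $f_1, \ldots, f_m \in L[t]$, not all zero, satisfying
\[
a_1 f_1^2 + \cdots + a_m f_m^2 = 0 \qquad \text{in } L[t].
\]
Let $d = \max_i \deg f_i$ and write $f_i = c_i t^d + (\text{lower order terms})$, where $c_i \in L$ is zero if $\deg f_i < d$ and nonzero for at least one index $i$. Comparing the coefficient of $t^{2d}$ on both sides of the displayed identity yields
\[
a_1 c_1^2 + \cdots + a_m c_m^2 = 0,
\]
which exhibits a nontrivial zero of $q$ over $L$, contradicting the anisotropy of $q$ over $L$.

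There is no real obstacle here; the only subtle point is the reduction step, which relies on the observation that any finite collection of elements of a purely transcendental extension lies in a subextension generated by finitely many algebraically independent elements. The top-degree coefficient trick is the mechanical heart of the argument and requires only that $\Char L \ne 2$ implicitly through the fact that $q$ is given in diagonal form, which is always possible in this characteristic.
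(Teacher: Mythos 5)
Your proof is correct. The paper itself does not give an argument for this lemma---it simply cites \cite[Lemma~IX.1.1]{lam} and \cite[Lemma~7.15]{ekm}---and your reduction to $L(t)$ followed by the leading-coefficient comparison is the standard elementary proof found in those references (a common variant clears denominators, reduces to coprime $f_i$, and evaluates at $t=0$; your top-degree version is equally valid and avoids the gcd step).
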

\begin{proof}
See, e.g., \cite[Lemma~IX.1.1]{lam} or \cite[Lemma~7.15]{ekm}.
\end{proof}

The next two results will help us prove lower bounds for the $m$-invariants of the fields we construct. 
\begin{lemma}
\label{Witt index vs. first Witt index}
Let $q$ be an anisotropic quadratic form of dimension at least two over a field~$L$ of characteristic $\ne 2$. Then for each $a \in L^{\times}$ such that $q \perp \langle -a \rangle$ is anisotropic, letting $L' = L(q \perp \langle -a \rangle)$ we have $i_W(q_{L'}) = i_1(q \perp \langle -a \rangle) - 1$.
\end{lemma}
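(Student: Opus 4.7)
I plan to set $\varphi := q \perp \langle -a\rangle$. By hypothesis $\varphi$ is anisotropic of dimension at least $3$, so $L' := L(\varphi)$ is well defined and $i_W(\varphi_{L'}) = i_1(\varphi)$ by definition. The whole argument turns on the tautological identity $\varphi_{L'} = q_{L'} \perp \langle -a\rangle$.

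First I would use the standard additivity formula
\[
i_W(\alpha \perp \beta) \;=\; i_W(\alpha) + i_W(\beta) + i_W\bigl(\alpha_{\mathrm{an}} \perp \beta_{\mathrm{an}}\bigr),
\]
applied with $\alpha = q_{L'}$ and $\beta = \langle -a\rangle$ (already anisotropic), to obtain
\[
i_1(\varphi) \;=\; i_W(q_{L'}) + i_W\bigl((q_{L'})_{\mathrm{an}} \perp \langle -a\rangle\bigr).
\]
The second summand lies in $\{0,1\}$, being $0$ precisely when $(q_{L'})_{\mathrm{an}}$ does not represent $a$ over $L'$ and $1$ when it does. Hence $i_W(q_{L'}) \in \{i_1(\varphi) - 1,\, i_1(\varphi)\}$, and the lemma reduces to showing that $(q_{L'})_{\mathrm{an}}$ represents $a$ over $L'$.

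To produce at least a representation of $a$ by $q_{L'}$, I would use the geometry of the quadric. The projective hypersurface $Q_\varphi = \{q(\vec x) - a y^2 = 0\}$ is geometrically integral of positive dimension $\dim\varphi - 2$, while the intersection with the hyperplane $\{y = 0\}$ is the proper closed subvariety $\{q = 0\}\cap\{y=0\}$. The generic point of $Q_\varphi$ therefore lies in the open chart $\{y\ne 0\}$, and evaluating the tautological isotropic vector there yields $(\vec u, 1) \in L'^{\dim\varphi}$ with $q(\vec u) = a$. In particular $q_{L'}$ represents $a$, which already settles the case in which $q_{L'}$ is itself anisotropic.

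The main obstacle is upgrading this representation to $(q_{L'})_{\mathrm{an}}$ when $q_{L'}$ is isotropic. Here I would compare, via Witt cancellation, the two decompositions
\[
\varphi_{L'} \;=\; q_{L'}\perp\langle -a\rangle \;=\; i_1(\varphi)\,\mathbb{H}\perp(\varphi_{L'})_{\mathrm{an}},
\]
which shows that $(q_{L'})_{\mathrm{an}}$ represents $a$ if and only if $\langle -a\rangle$ does not split off as an orthogonal summand of $(\varphi_{L'})_{\mathrm{an}}$; equivalently, no maximal totally isotropic subspace of $\varphi_{L'}$ of dimension $i_1(\varphi)$ is contained in the codimension-one subspace cut out by $y = 0$. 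This last claim is the technical heart: it is forced by the fact that the generic isotropic vector $(\vec u, 1)$ has nonzero last coordinate, so any maximal totally isotropic subspace must meet the $\langle -a\rangle$-direction nontrivially. Granting this, $i_W(q_{L'}) \le i_1(\varphi) - 1$, and combined with Step~1 this gives the equality $i_W(q_{L'}) = i_1(\varphi)-1$.
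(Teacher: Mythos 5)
Your reduction via the additivity formula $i_W(\alpha\perp\beta)=i_W(\alpha)+i_W(\beta)+i_W(\alpha_{\mathrm{an}}\perp\beta_{\mathrm{an}})$ is correct and gives a clean route to the lower bound $i_W(q_{L'})\ge i_1(\varphi)-1$ (the paper instead cites \cite[Exercise I.16(2)]{lam}, which is essentially the same elementary fact about codimension-one subforms). You also correctly identify that the lemma is equivalent to the claim that $(q_{L'})_{\mathrm{an}}$ represents $a$, and your handling of the case where $q_{L'}$ is anisotropic (via the tautological point, which lies off $\{y=0\}$) is fine. Up to here you parallel the paper.

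The gap is in the case $q_{L'}$ isotropic, which you yourself flag as the ``technical heart.'' You assert that because \emph{the} generic isotropic vector $(\vec u,1)$ has nonzero last coordinate, \emph{every} maximal totally isotropic subspace of $\varphi_{L'}$ of dimension $i_1(\varphi)$ must fail to lie inside $\{y=0\}$. This does not follow: the tautological point produces a single isotropic vector of $\varphi_{L'}$ with $y\ne 0$, but once $q_{L'}$ is itself isotropic there are many more isotropic vectors (and totally isotropic subspaces) over $L'$, and nothing in your observation constrains those that do lie in $\{y=0\}$. Concretely, $i_W(q_{L'})=i_1(\varphi)$ is not ruled out by knowing one isotropic vector avoids the hyperplane. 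What is actually needed here is a nontrivial upper bound on $i_W(q_{L'})$ for $q$ a proper subform of $\varphi$ after base change to $L(\varphi)$. The paper gets this from \cite[Corollary~4.2]{karpenko-merkurjev} (equivalently \cite[Theorem~1.1]{scully}), which yields $i_W(q_{L'})\le \dim q-\dim\varphi+i_1(\varphi)=i_1(\varphi)-1$. That inequality is a genuine theorem about function fields of quadrics (proved using incompressibility/essential-dimension techniques) and is not accessible by the elementary geometry you invoke. Your ``if and only if'' rephrasing via $\langle -a\rangle$ splitting off $(\varphi_{L'})_{\mathrm{an}}$ is also not quite an equivalence as stated, but the decisive problem is the unsupported final assertion; to complete the proof you would need to import something of the strength of the Karpenko--Merkurjev or Scully bound.
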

\begin{proof}
We first observe that \cite[Exercise I.16(2)]{lam} implies $i_W(q_{L'}) \geq i_1(q \perp \langle -a \rangle) - 1$. To complete the proof we must prove the opposite inequality.

First, suppose that $q_{L'}$ is anisotropic. Then $i_W(q_{L'}) = 0$, and since $i_1(p) \geq 1$ for any anisotropic quadratic form $p$ over $L$ with $\dim p \geq 2$, we have $i_1(q \perp \langle -a \rangle) \geq 1 = i_W(q_{L'}) + 1$, proving the desired inequality in this case.

Next, suppose that $q_{L'}$ is isotropic. Since $q$ and $q \perp \langle -a \rangle$ are anisotropic over $L$ by assumption, by \cite[Corollary~4.2]{karpenko-merkurjev} (see also \cite[Theorem~1.1]{scully}) we have
\[
	i_W(q_{L'}) \leq \dim q - \dim(q \perp \langle -a \rangle) + i_1(q \perp \langle -a \rangle) = i_1(q \perp \langle -a \rangle) - 1,
\]
as desired.
\end{proof}

\begin{prop}
\label{remains anisotropic}
Let $q$ be an anisotropic quadratic form of dimension at least two over a field~$L$ of characteristic $\ne 2$, and let $a \in L^{\times}$ be any element such that $q$ remains anisotropic over ${L' = L(q \perp \langle -a \rangle)}$. Then all anisotropic quadratic forms $p$ over $L$ with ${1 \leq \dim p \leq \dim q}$ remain anisotropic over $L'$.
\end{prop}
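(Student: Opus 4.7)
The plan is to argue by contradiction, using the Karpenko--Merkurjev inequality (the same tool that drives the proof of Lemma \ref{Witt index vs. first Witt index}). The key observation is that the hypothesis ``$q$ remains anisotropic over $L'$'' is really a disguised statement that $i_1(q \perp \langle -a \rangle) = 1$: since $i_W(q_{L'}) = 0$ by hypothesis, Lemma \ref{Witt index vs. first Witt index} immediately yields $i_1(q \perp \langle -a \rangle) - 1 = 0$, i.e.\ $i_1(q \perp \langle -a \rangle) = 1$. This single numerical fact is all I need about the form defining $L'$.

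Next, I would suppose for contradiction that some anisotropic form $p$ over $L$ with $1 \leq \dim p \leq \dim q$ becomes isotropic over $L'$. The case $\dim p = 1$ is trivial, since a one-dimensional anisotropic form $\langle c \rangle$ remains anisotropic over every extension of $L$ (as $cx^2 = 0$ forces $x = 0$). So I may assume $\dim p \geq 2$ and invoke \cite[Corollary~4.2]{karpenko-merkurjev} on the pair of anisotropic forms $p$ and $q \perp \langle -a \rangle$: since $p$ becomes isotropic over $L' = L(q \perp \langle -a \rangle)$, this gives
\[
  i_W(p_{L'}) \;\leq\; \dim p - \dim(q \perp \langle -a \rangle) + i_1(q \perp \langle -a \rangle) \;=\; \dim p - \dim q \;\leq\; 0,
\]
contradicting $i_W(p_{L'}) \geq 1$. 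Hence no such $p$ exists, which is the desired conclusion.

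The only point worth flagging as an obstacle is ensuring that the Karpenko--Merkurjev bound is applicable: it requires both $p$ and $q \perp \langle -a \rangle$ to be anisotropic over $L$ and $p$ to become isotropic over $L'$, all of which are in place (respectively, by the standing hypothesis on $p$, by the standing hypothesis on $a$, and by the contradiction assumption). Everything else reduces to bookkeeping with the dimensions $\dim(q \perp \langle -a \rangle) = \dim q + 1$ and $i_1(q \perp \langle -a \rangle) = 1$.
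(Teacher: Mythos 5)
Your argument is essentially the same as the paper's: extract $i_1(q \perp \langle -a \rangle) = 1$ from Lemma~\ref{Witt index vs. first Witt index}, then use the Karpenko--Merkurjev bound to force any $p$ of dimension $\leq \dim q$ to stay anisotropic over $L'$. However, there is a gap in your handling of the hypotheses: you assert that $q \perp \langle -a \rangle$ is anisotropic over $L$ ``by the standing hypothesis on $a$,'' but the hypothesis only says that \emph{$q$} remains anisotropic over $L'$, not that $q \perp \langle -a \rangle$ is anisotropic over $L$. These are genuinely different: e.g.\ $q = \langle 1,1\rangle$ over $\mathbb{R}$ with $a = 2$ gives $q \perp \langle -a\rangle$ isotropic while $q$ stays anisotropic over $L'$ (which is then purely transcendental over $L$). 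In that isotropic case, $i_1(q\perp\langle -a\rangle)$ is not defined, so neither Lemma~\ref{Witt index vs. first Witt index} nor the Karpenko--Merkurjev inequality applies, and your chain of deductions breaks. The fix is the one the paper makes at the outset: split into the case where $q \perp \langle -a\rangle$ is isotropic (then $L'/L$ is purely transcendental and Lemma~\ref{anisotropic over purely transcendental} finishes it) and the case where it is anisotropic (where your argument goes through verbatim).
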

\begin{proof}
The claim is trivial if $\dim p = 1$, so suppose $\dim p \geq 2$. If $q \perp \langle -a \rangle$ is isotropic over~$L$, then~$L'$ is a purely transcendental extension of $L$, so $p$ remains anisotropic over $L'$ by Lemma~\ref{anisotropic over purely transcendental}. If $q \perp \langle -a \rangle$ is anisotropic over $L$, then since~$q$ remains anisotropic over $L'$ by assumption, Lemma~\ref{Witt index vs. first Witt index} implies $i_1(q \perp \langle -a \rangle) = 1$. Therefore \cite[Corollary~4.2]{karpenko-merkurjev} implies all anisotropic quadratic forms over $L$ of dimension $\leq \dim (q \perp \langle -a \rangle) - i_1(q \perp \langle -a \rangle) = \dim q$ remain anisotropic over $L'$.
\end{proof}

\section{The main construction}
\label{main construction}
In this section we describe the construction we will use to build fields $F$ with prescribed $m$-invariant. This is similar to the construction found in \cite[Proposition~4.3]{6-dim}, and is a common approach to building fields with desired invariants (see, e.g., \cite{becher, leep, refinements, 6-dim, hoff99, hornix, izh01, merkurjev, vishik}).
\begin{const}
\label{construction}
Let $k$ be a field of characteristic $\ne 2$ over which there exists an anisotropic quadratic form $\psi$ with $\dim \psi \geq 2$. Define fields $E_i$ for $i \geq 0$ inductively as follows. Let $\widetilde{E}_i = E_i(X_i)$ for an indeterminate~$X_i$. Let
\begin{align*}
	E_0 &= k \\
	E_i &= \widetilde{E}_{i-1}\left(\{\psi \perp \langle -a \rangle \mid a \in E_{i-1}^{\times}\}\right) \text{ for } i \geq 1.
\end{align*}
Let $F = \bigcup_{i = 0}^{\infty} E_i$.
\end{const}

\begin{prop}
\label{possible m-inv}
Let $k, \psi, E_i, F$ be as in Construction \ref{construction}. If $\psi$ remains anisotropic over~$F$, then $m(F) = \dim \psi$.
\end{prop}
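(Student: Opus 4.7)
The plan is to establish $m(F) = \dim \psi$ by proving both inequalities $m(F) \leq \dim \psi$ and $m(F) \geq \dim \psi$ separately.

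For the upper bound, I would show that $\psi$ itself is universal over $F$; combined with the assumption that $\psi$ is anisotropic over $F$, this gives $m(F) \leq \dim \psi$. Given $a \in F^\times$, choose $j$ with $a \in E_j^\times$. Then $\psi \perp \langle -a \rangle$ is one of the forms whose function field is adjoined in the definition of $E_{j+1}$, so $\psi \perp \langle -a \rangle$ is isotropic over $E_{j+1} \subseteq F$. Combined with the anisotropy of $\psi$ over $F$, this forces $\psi$ to represent $a$, proving universality.

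For the lower bound, let $q$ be an arbitrary anisotropic quadratic form over $F$ with $\dim q < \dim \psi$. I would exhibit an element of $F^\times$ that $q$ fails to represent. Since $q$ has only finitely many coefficients, $q$ is defined over some $E_i$, where it remains anisotropic. The natural candidate for a non-represented element is the indeterminate $X_{i+1} \in \widetilde{E}_i \subseteq F$. A standard polynomial degree / Cassels--Pfister argument shows that $p := q \perp \langle -X_{i+1} \rangle$ is anisotropic over $\widetilde{E}_i = E_i(X_{i+1})$: a representation of $X_{i+1}$ by $q$ over $E_i(X_{i+1})$ would, after matching leading terms, force an isotropic vector for $q$ over $E_i$, contradicting anisotropy.

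The heart of the proof is then propagating the anisotropy of $p$ up the tower $\widetilde{E}_i \subseteq E_{i+1} \subseteq \widetilde{E}_{i+1} \subseteq E_{i+2} \subseteq \cdots$ to obtain anisotropy over $F$. The key point is that $\dim p = \dim q + 1 \leq \dim \psi$, while $\psi$ remains anisotropic over every subfield of $F$ containing $k$. For a single function-field step, Proposition \ref{remains anisotropic} applied with $\psi$ in the role of ``$q$'' yields anisotropy of $p$ over $L(\psi \perp \langle -a \rangle)$ from anisotropy of $p$ over $L$. The main technical obstacle is the bookkeeping for the infinite free compositum, since Proposition \ref{remains anisotropic} is only a single-step statement; I would handle it by the standard direct-limit reduction, since any isotropy witness over the infinite compositum lives in a finite subcompositum, where iterated application of the single-step proposition applies. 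This carries anisotropy from $\widetilde{E}_j$ to $E_{j+1}$, and Lemma \ref{anisotropic over purely transcendental} then carries it to $\widetilde{E}_{j+1}$. Induction on $j \geq i$, together with a final direct limit for $F = \bigcup_j E_j$, gives anisotropy of $p$ over $F$, so $q$ does not represent $X_{i+1}$ and is not universal, yielding $m(F) \geq \dim \psi$.
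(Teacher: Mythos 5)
Your proposal is correct and follows essentially the same route as the paper: universality of $\psi$ gives the upper bound, and for the lower bound one shows $q\perp\langle -X\rangle$ (with $X$ the fresh indeterminate at the level where $q$ is defined) stays anisotropic up the tower via Lemma~\ref{anisotropic over purely transcendental} and Proposition~\ref{remains anisotropic}. The only discrepancy is a harmless indexing slip — with the paper's conventions $\widetilde{E}_i = E_i(X_i)$, so the non-represented element should be $X_i$, not $X_{i+1}$; otherwise your extra care with the Cassels--Pfister step and the finite-subcompositum reduction just makes explicit what the paper leaves implicit.
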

\begin{proof}
Let $a \in F^{\times}$ be arbitrary. Then $a \in E_i^{\times}$ for some $i \geq 0$. Hence $\psi \perp \langle -a \rangle$ is isotropic over~$E_{i+1}$ and thus over $F$. Therefore the anisotropic form $\psi$ represents $a$ over $F$, and since $a \in F^{\times}$ was arbitrary, we conclude that $\psi$ is universal over $F$. Thus $m(F) \leq \dim \psi$.

To prove the opposite inequality, we will show that there are no anisotropic universal quadratic forms over $F$ with dimension $< \dim \psi$. To that end, let $q$ be any anisotropic quadratic form over~$F$ with $\dim q < \dim \psi$. The form $q$ is defined (and anisotropic) over~$E_i$ for some $i \geq 0$. One can then easily show that the form $q_i := q_{\widetilde{E}_i} \perp \langle -X_i \rangle$ is anisotropic over~$\widetilde{E}_i$. We will show that~$q_i$ is anisotropic over~$E_j$ for all $j > i$, which implies $q_i$ is anisotropic over $F$. This will complete the proof since~$q_i$ being anisotropic over $F$ implies that $q$ does not represent $X_i \in F^{\times}$, hence $q$ is not universal over~$F$.

By assumption, $\psi$ remains anisotropic over $F$, so $\psi$ is necessarily anisotropic over $E_n$ for all $n \geq 0$. Since $q_i$ is anisotropic over $\widetilde{E}_i$, if $q_i$ became isotropic over $E_{i+1}$ then $q_i$ would necessarily be isotropic over the function field of $\psi \perp \langle -a_i \rangle$ for some $a_i \in E_i^{\times}$. However,~$\psi$ is anisotropic over $E_{i+1}$, hence is anisotropic over the function field of $\psi \perp \langle -a \rangle$ for all $a \in E_i^{\times}$. So, since $\dim q_i = \dim q + 1 \leq \dim \psi$, Proposition \ref{remains anisotropic} implies that $q_i$ remains anisotropic over~$E_{i+1}$. This, in turn, implies $q_i$ is anisotropic over $\widetilde{E}_{i+1}$ by Lemma \ref{anisotropic over purely transcendental}. Iterating the above argument shows that~$q_i$ is anisotropic over $E_j$ for all $j > i$, and the proof is complete.
\end{proof}

\section{Proof of the main theorem}
\label{proof of the main theorem}
As we saw in the introduction, using rather elementary methods one can show that there are no fields with $m$-invariant $3$ or~$5$. However, in this section we will use Construction \ref{construction} to show that $3$ and $5$ are the only positive integers that cannot be realized as the $m$-invariant of a field. We separate this investigation into three cases (Theorems \ref{any even m-inv}, \ref{odd m-inv 1}, and \ref{odd m-inv 2}), as the methods of proof differ slightly. We begin with positive even integers.

\begin{theorem}
\label{any even m-inv}
For each positive integer $n \geq 1$ there is a field $F$ of characteristic $\ne 2$ with $m(F) = 2n$.
\end{theorem}
\begin{proof}
Let $k$ be a field of characteristic $\ne 2$ over which there exists a central division algebra~$D$ that is the product of $n-1$ quaternion algebras. For example, we may take $k = \mathbb{Q}(t_1, \ldots, t_{n-1}, u_1, \ldots, u_{n-1})$ and $D = \left(\frac{t_1, u_1}{k}\right) \otimes_k \left(\frac{t_2, u_2}{k}\right) \otimes_k \cdots \otimes_k \left(\frac{t_{n-1}, u_{n-1}}{k}\right)$ (see, e.g., the first paragraph of the proof of \cite[Theorem 4]{merkurjev}). Then $D$ has index $2^{n-1}$ and by \cite[Lemma~38.3]{ekm} there is a quadratic form $\psi \in I^2 (k)$ with $\dim \psi = 2n$ and $c(\psi) = [D]$ in $\Br (k)$, where $[D]$ is the class of $D$ in $\Br (k)$. The quadratic form $\psi$ is anisotropic over $k$ by \cite[Corollary~38.2]{ekm}.

Now let $E_i, F$ be the fields constructed via Construction \ref{construction} using $k, \psi$. If $B$ is a division algebra of index $< 2^n$ over a field $L$ for some integer $n \geq 1$, then if $L'$ is either a purely transcendental extension of $L$ or the function field of a regular quadratic form $\varphi$ over $L$ with $\dim \varphi \geq 2n + 1$, then $B_{L'}:= B \otimes_L L'$ is a division algebra over $L'$. This follows from, e.g., \cite[p.116]{ekm} in the former case and from \cite[Corollary~30.9]{ekm} in the latter case. Applying this argument repeatedly to the division algebra $D$ over $k$, we conclude that $D_{E_i}$ is a division algebra over $E_i$ for all $i \geq 0$, hence a division algebra over $F$. Now applying \cite[Corollary~38.2]{ekm}, since $c(\psi_F) = [D_F]$, we conclude that $\psi$ remains anisotropic over~$F$, hence $m(F) = \dim \psi = 2n$ by Proposition \ref{possible m-inv} and the proof is complete.
\end{proof}

We now shift focus to realizing all positive odd integers different from $3$ and $5$ as the $m$-invariant of a field. We begin with positive odd integers that cannot be written as $2^r + 1$ for any integer $r$.
\begin{theorem}
\label{odd m-inv 1}
For each positive odd integer $n$ that is not of the form $2^r + 1$ with $r$ a positive integer, there is a field $F$ of characteristic $\ne 2$ with $m(F) = n$.
\end{theorem}

The key ingredient to the proof of Theorem \ref{odd m-inv 1} is the following result.
\begin{lemma}
\label{splitting pattern}
Let $q$ be an odd-dimensional anisotropic quadratic form over a field $L$ of characteristic $\ne 2$ with $\dim q = 2s + 1$ for some integer $s \geq 3$ such that $s \ne 2^r$ for any integer $r \geq 2$. Suppose $q$ has trivial discriminant and \emph{ind} $q = 2^{s-1}$. Then for all $a \in L^{\times}$, $q$ remains anisotropic over $L' = L(q \perp \langle -a \rangle)$.
\end{lemma}
\begin{proof}
Let $a \in L^{\times}$ be given. If $q \perp \langle -a \rangle$ is isotropic, then $L'$ is a purely transcendental extension of $L$ and the claim follows from Lemma \ref{anisotropic over purely transcendental}. So to complete the proof it suffices to consider the case when $q \perp \langle -a \rangle$ is anisotropic. Before proceeding, we observe that, because we assumed $q$ has trivial discriminant, a straightforward computation shows that $c(q \perp \langle -a \rangle) = c(q)$, hence $\ind (q \perp \langle -a \rangle) = \ind q = 2^{s-1}$. 

First, suppose that $q \perp \langle -a \rangle \in I^2(L)$. Then because $\ind (q \perp \langle -a \rangle) = 2^{s-1}$ and ${\dim (q \perp \langle -a \rangle) = 2s + 2}$, \cite[Proposition~2.1]{hoff98} implies that $i_1(q \perp \langle -a \rangle) = 1$. Now applying Lemma \ref{Witt index vs. first Witt index}, we conclude that $q_{L'}$ is anisotropic.

Finally, suppose that $q \perp \langle -a \rangle \not\in I^2(L)$. Then since $\ind(q \perp \langle -a \rangle) = 2^{s-1}$, it follows from \cite[Proposition~2.1]{hoff98} that $i_1(q \perp \langle -a \rangle) \leq 2$. Consequently, Lemma \ref{Witt index vs. first Witt index} implies that ${i_1(q \perp \langle -a \rangle) = 2}$ if and only if $q_{L'}$ is isotropic. By contradiction, suppose $q_{L'}$ is isotropic. Then $i_1(q \perp \langle -a \rangle) = 2$ and \cite[Proposition~2.1]{hoff98} implies that $q \perp \langle -a \rangle$ has (incremental) splitting pattern $(2, 1, 1, \ldots, 1)$. It then follows from \cite[Theorem~7.7(a), Theorem~7.8, Theorem~4.20]{motives} that $\dim (q \perp \langle -a \rangle) = 2^r + 2$ for some integer $r$, hence $\dim q = 2^r + 1$ for some $r$. We have reached a contradiction, thus $q_{L'}$ is anisotropic, as desired.
\end{proof}

\begin{proof}[Proof of Theorem \ref{odd m-inv 1}]
Let $n \geq 1$ be given. If $n = 1$, then letting $F$ be any algebraically closed field of characteristic $\ne 2$, we have $m(F) = 1$, proving the theorem in this case. Next, suppose $n > 1$ and write $n = 2s + 1$ for some integer $s \geq 3$ such that $s \ne 2^r$ for any integer $r \geq 2$. Let $k$ be a field of characteristic $\ne 2$ over which there exists an anisotropic quadratic form $\psi$ with $\dim \psi = n = 2s + 1$ and $\ind \psi = 2^{s-1}$. For example, we may take $k = k_0(t)$ where $k_0$ is a field of characteristic $\ne 2$ over which there exists a central division algebra~$D$ that is the product of $s-1$ quaternion algebras and let $\psi = \varphi_k \perp \langle t \rangle$ for $\varphi \in I^2(k_0)$ a $2s$-dimensional form over $k_0$ with $c(\varphi) = [D] \in \Br(k_0)$. After scaling $\psi$ if necessary (which does not affect $\ind \psi$ since $\dim \psi$ is odd), we may assume that $\psi$ has trivial discriminant. 

Now let $E_i, F$ be the fields constructed via Construction \ref{construction} using $k, \psi$. We will inductively show that $\psi$ remains anisotropic over $E_i$ with $\ind \psi_{E_i} = 2^{s-1}$ for all $i \geq 0$. This will, in turn, imply that $\psi$ remains anisotropic over $F$, hence $m(F) = \dim \psi = n$ by Proposition~\ref{possible m-inv}. The base case of $i = 0$ is trivial since $E_0 = k$. Now suppose there is some $i \geq 0$ such that~$\psi_{E_i}$ is anisotropic with $\ind \psi_{E_i} = 2^{s-1}$. The Schur index of $\psi_{E_i}$ remains unchanged over purely transcendental extensions (see, e.g., \cite[Lemma~1.2]{izh01}), thus $\ind \psi_{\widetilde{E}_i} = 2^{s-1}$ and $\psi_{\widetilde{E}_i}$ is anisotropic by Lemma \ref{anisotropic over purely transcendental}. Now suppose $\widetilde{E}_i \subseteq L \subseteq E_{i+1}$ is any intermediate field extension over which $\ind \psi_L = 2^{s-1}$ and $\psi_L$ is anisotropic. Then for any $a \in E_i^{\times}$, \cite[Corollary~30.9]{ekm} implies $\ind \psi_{L(\psi \perp \langle -a \rangle)} = 2^{s-1}$, and Lemma \ref{splitting pattern} implies $\psi_{L(\psi \perp \langle -a \rangle)}$ is anisotropic. We therefore conclude that $\psi_{E_{i+1}}$ is anisotropic with $\ind \psi_{E_{i+1}} = 2^{s-1}$, completing the proof by induction, and the proof of the theorem as a whole.
\end{proof}
To conclude this section we complete the proof of the Main Theorem by realizing the remaining odd integers $2^r + 1$ for positive integers $r \geq 3$ as the $m$-invariant of a field.
\begin{theorem}
\label{odd m-inv 2}
For each positive integer $r \geq 3$ there is a field $F$ of characteristic $\ne 2$ with $m(F) = 2^r + 1$.
\end{theorem}
\begin{proof}
Let $k_0$ be any field of characteristic $0$, let $k = k_0(t_1, t_2, \ldots, t_{2^r + 1})$, and consider the generic anisotropic $(2^r + 1)$-dimensional quadratic form $\psi = \langle t_1, t_2, \ldots, t_{2^r + 1} \rangle$ over $k$. In \cite{vishik}, Vishik showed that the anisotropy of $\psi$ over field extensions $L/k$ is detected by a certain property $B(L)$ of the \textit{elementary discrete invariant} of $\Psi_L$, denoted by EDI$(\Psi_L)$, where $\Psi$ is the projective quadric defined by $\psi = 0$. That is, if $B(L)$ is satisfied then $\psi_L$ is anisotropic. See \cite[Section 2]{vishik} for the definition of EDI$(\Psi)$ and \cite[Theorem~5.1, Corollary~5.2]{vishik} for the definition of $B(L)$. If $L/k$ is a field extension over which $B(L)$ is satisfied and $L'$ is either a purely transcendental extension of $L$ or the function field of a quadratic form over $L$ of dimension $> \dim \psi$, then $B(L')$ is satisfied (see \cite[Theorem~5.1]{vishik} for the function field case). In particular, if $B(L)$ is satisfied then $B(L(\psi \perp \langle -a \rangle))$ is satisfied for all $a \in L^{\times}$. Furthermore, for a direct system $\{L_j\}_j$ of field extensions of $k$ with limit $L_{\infty}$, if $B(L_j)$ is satisfied for all $j$, then $B(L_{\infty})$ is satisfied (see the proof of \cite[Corollary~5.2]{vishik}).  

Now let $E_i, F$ be the fields constructed via Construction \ref{construction} using $k, \psi$. Because the form~$\psi$ is generic over $k$, it follows that $B(k)$ is satisfied (see the proof of \cite[Corollary~5.2]{vishik}). This then implies that $B(E_i)$ is satisfied for all $i \geq 0$, hence $B(F)$ is satisfied. Therefore $\psi$ is anisotropic over $F$ and we conclude that $m(F) = \dim \psi = 2^r + 1$ by Proposition \ref{possible m-inv}.
\end{proof}

\section{Examples of alternative arguments}
\label{main results}
In Section \ref{proof of the main theorem} we showed that for each positive integer $n \ne 3, 5$ there is a field of characteristic $\ne 2$ with $m$-invariant $n$. The arguments in that section, particularly those used to construct fields of prescribed odd $m$-invariant, relied on highly technical results in quadratic form theory like the Binary Motive Theorem \cite[Theorem~4.20]{motives}. In this section we will give examples of alternative arguments for the existence of fields with prescribed $m$-invariants that use more concrete results, most of which are about low-dimensional quadratic forms and can be understood in more elementary terms than, say, the Binary Motive Theorem or \cite[Theorem~5.1]{vishik}.

\begin{example}[Field with $m$-invariant $7$]
\label{m-inv 7}
Let $k$ be a field of characteristic $\ne 2$ over which there is an anisotropic non-universal Albert form~$\varphi$ (i.e., $\dim \varphi =$~$6$ and $\varphi \in I^2(k)$). For example, take $k = \mathbb{C}((t_1))((t_2))((t_3))((t_4))$. Recall that for each field extension $K/k$, $\ind \varphi_K = 4$ if and only if $\varphi$ is anisotropic over~$K$ \cite[Theorem~3.12(iii)]{jacobson}. After scaling~$\varphi$ if necessary, we may assume that the non-universal form~$\varphi$ does not represent $1$ over $k$, hence the form $\psi := \varphi \perp \langle -1 \rangle$ is anisotropic over~$k$. A straightforward calculation shows that $c(\varphi) = c(\psi)$, and since~$\varphi$ is anisotropic over $k$, we conclude $\ind \psi = \ind \varphi =$~$4$.

Now let $E_i, F$ be the fields constructed via Construction \ref{construction} using $k, \psi$. Since anisotropic Albert forms remain anisotropic over the function field of any quadratic form of dimension~${\geq 7}$ \cite[Lemma~1.1, Proposition~5.6]{mer}, \cite[Theorem XIII.2.6]{lam} and remain anisotropic over purely transcendental extensions by Lemma \ref{anisotropic over purely transcendental}, we conclude that the anisotropic Albert form $\varphi$ over $k$ remains anisotropic over $F$. Now consider any intermediate field extension $k \subseteq L \subseteq F$. The form $\varphi$ being anisotropic over $F$ has a number of consequences for such fields $L$. First, this implies that $\varphi$ is anisotropic over~$L$, hence $\ind \psi_L = \ind \varphi_L = 4$. Next,~$\varphi$ is not a Pfister neighbor over $L$ since Albert forms are Pfister neighbors if and only if they are hyperbolic. This, in turn, implies that for all $a \in L^{\times}$, $\psi \perp \langle -a \rangle$ is not a Pfister neighbor over $L$. As a consequence, if $\psi$ is anisotropic over~$L$, then since $\ind \psi_L = 4$ and $\psi \perp \langle -a \rangle$ is not a Pfister neighbor for all $a \in L^{\times}$, \cite[Proposition~3.10(1)]{kar04} implies that $\psi$ remains anisotropic over $L(\psi \perp \langle -a \rangle)$ for all $a \in L^{\times}$. This argument together with Lemma~\ref{anisotropic over purely transcendental} shows that~$\psi$ is anisotropic over $E_i$ for all $i \geq 0$, hence anisotropic over~$F$. Thus $m(F) = 7$ by Proposition \ref{possible m-inv}.
\end{example}
\begin{remark}
The argument in Example \ref{m-inv 7} shows that the field $F$ with $m(F) = 7$ we constructed is not linked since the anisotropic Albert form $\varphi$ over $k$ remains anisotropic over~$F$. The existence of an anisotropic Albert form over $F$ is necessary since linked fields cannot have $m$-invariant $7$ \cite[p.195, 1.1b)]{m-inv}.
\end{remark}

\begin{example}[Field with $m$-invariant $9$]
\label{m-inv 9}
Let $k$ be a field of characteristic $\ne 2$ over which there exists an essential $9$-dimensional quadratic form $\psi$ (see \cite[Definition~0.2]{izh01} for the definition). For example, we may take $k =$~$k_0(t_1, \ldots, t_9)$ for any field $k_0$ of characteristic $\ne 2$. In general, if $\rho$ is an essential $9$-dimensional quadratic form over a field $L$, then Izhboldin showed that if $L'$ is either a purely transcendental extension of $L$ or the function field of a quadratic form of dimension $\geq 10$ over $L$, then $\rho_{L'}$ is an essential $9$-dimensional quadratic form (see \cite[Lemma~1.24]{izh01} for the former case and \cite[Proposition~0.4]{izh01} for the latter case). Now let $E_i, F$ be the fields constructed via Construction \ref{construction} using $k, \psi$. Then since $\psi$ is essential over $k$, by an inductive argument that uses these results of Izhboldin, one can show that $\psi_{E_i}$ is an essential $9$-dimensional quadratic form for all $i \geq 0$. Essential $9$-dimensional quadratic forms are anisotropic by definition, so $\psi_{E_i}$ is anisotropic for all $i \geq 0$. Thus $\psi$ is anisotropic over $F$, so $m(F) = 9$ by Proposition \ref{possible m-inv}.
\end{example}

In Example \ref{m-inv 13} we will construct a field with $m$-invariant 13, using the following lemma in the process.

\begin{lemma}
\label{13-dimensional lemma}
Let $q$ be an anisotropic $13$-dimensional quadratic form over a field $L$ of characteristic $\ne 2$ such that $\det q = 1$, $\emph{ind } q = 4$, and $q$ has splitting pattern $(1, 3, 1, 1)$. Let~$L'$ be either a purely transcendental extension of $L$ or the function field of $q \perp \langle -a \rangle$ for any $a \in L^{\times}$. Then 
\begin{enumerate}[label=(\alph*)]
	\item $\emph{ind }q_{L'} = 4$, 
	
	\item $q_{L'}$ is anisotropic,
	
	\item $q_{L'}$ has splitting pattern $(1, 3, 1, 1)$.
\end{enumerate}
\end{lemma}
\begin{proof}
The Schur index of any quadratic form is preserved under purely transcendental extension (see, e.g., \cite[Lemma~1.2]{izh01}), and since $\ind q = 4$, the Schur index of $q$ also remains unchanged over the function field of any quadratic form of dimension $\geq 7$ by \cite[Corollary~30.9]{ekm}. Since $\dim(q \perp \langle -a \rangle) = 14$ for all $a \in L^{\times}$, this proves (a). 

Before proceeding, we observe that since $i_1(q) = 1 \ne 5$, it follows that $q$ is not a Pfister neighbor \cite[Corollary~2]{function fields}. We now focus on proving (b), i.e., that $q_{L'}$ is anisotropic. If~$L'$ is purely transcendental over $L$, then this follows from Lemma \ref{anisotropic over purely transcendental}. So suppose ${L' = L(q \perp \langle -a \rangle)}$ for some $a \in L^{\times}$ such that $q \perp \langle -a \rangle$ is anisotropic over $L$ and, by contradiction, suppose $q_{L'}$ is isotropic. Then Lemma \ref{Witt index vs. first Witt index} implies $i_1(q \perp \langle -a \rangle) \geq 2$. Because $\dim(q \perp \langle -a \rangle) = 14$ and $i_1(q \perp \langle -a \rangle) \geq 2$, \cite[Conjecture~0.1]{kar03} implies $i_1(q \perp \langle -a \rangle) = 2$ or~$6$. If $i_1(q \perp \langle -a \rangle) = 6$, then $q \perp \langle -a \rangle$ is a Pfister neighbor \cite[Lemma~3.3]{hoff98}, which implies that $q$ is a Pfister neighbor. As we saw at the beginning of this paragraph,~$q$ is not a Pfister neighbor, thus $i_1(q \perp \langle -a \rangle) = 2$. (This also follows from \cite[Corollary~4.9]{motives} since $i_1(q) = 1$.) Now, since $q$ has splitting pattern $(1, 3, 1, 1)$, \cite[Theorem~7.6]{motives} implies that the splitting pattern of $q \perp \langle -a \rangle$ is a ``specialization'' of $(1, 1, 2, 1, 1, 1)$. That is, the splitting pattern of $q \perp \langle -a \rangle$ is obtained by possibly adding some collections of adjacent entries and possibly removing some initial entries from $(1, 1, 2, 1, 1, 1)$. Furthermore, Totaro showed that the only possible splitting patterns of 14-dimensional anisotropic quadratic forms with first Witt index $2$ are $(2, 2, 2, 1)$ and $(2, 1, 1, 2, 1)$ \cite[p.264]{totaro}. The splitting pattern $(2, 1, 1, 2, 1)$ is not a specialization of $(1, 1, 2, 1, 1, 1)$, therefore $q \perp \langle -a \rangle$ must have splitting pattern $(2, 2, 2, 1)$. This, in turn, implies that $q \perp \langle -a \rangle \simeq \langle \langle -a \rangle \rangle \otimes \varphi$ for some $7$-dimensional quadratic form $\varphi$ over $L$ \cite[p.265]{totaro}. A straightforward calculation shows that $c(q \perp \langle -a \rangle) = c(\langle \langle -a \rangle \rangle \otimes \varphi) = \left(\frac{a, -\det \varphi}{L}\right)$. Now, since $\det q = 1$, it follows that $c(q) = c(q \perp \langle -a \rangle)$, thus $c(q) = \left(\frac{a, -\det \varphi}{L}\right)$. This implies $\ind q \leq 2 < 4$, which is a contradiction of our assumption on $q$. Therefore $q_{L'}$ is anisotropic, proving (b). 

By \cite[Theorem~7.5]{motives}, the splitting pattern of $q_{L'}$ is a specialization of the splitting pattern of $q$. Furthermore, Vishik showed that the only possible splitting patterns of anisotropic $13$-dimensional quadratic forms are $(5, 1)$, $(1, 3, 1, 1)$, $(1, 1, 1, 3)$, and $(1, 1, 1, 1, 1, 1)$ \cite[Table~1, p.84]{motives}. Since $q_{L'}$ is anisotropic and the splitting pattern of $q_{L'}$ must be a specialization of $(1, 3, 1, 1)$, it follows that the splitting pattern of $q_{L'}$ is either $(5, 1)$ or $(1, 3, 1, 1)$. If $q_{L'}$ had splitting pattern $(5, 1)$, then $q_{L'}$ would be a Pfister neighbor \cite[Lemma~3.3]{hoff98}. However, because $q$ is not a Pfister neighbor over $L$, $q_{L'}$ is not a Pfister neighbor over~$L'$. This follows from \cite[Proposition~7]{function fields} if $L'$ is purely transcendental over $L$ and follows from \cite[Lemma~5, Corollary~2]{function fields} if $L'$ is the function field of $q \perp \langle -a \rangle$ for some $a \in L^{\times}$. Therefore $q_{L'}$ has splitting pattern $(1, 3, 1, 1)$, proving (c).
\end{proof}
\begin{example}[Field with $m$-invariant $13$]
\label{m-inv 13}
Let $k$ be a field of characteristic $\ne 2$ over which there exists an anisotropic $13$-dimensional quadratic form $\psi$ with $\det \psi = 1$, $\ind \psi = 4$, and splitting pattern $(1,3,1,1)$. See, e.g., \cite[p.79]{motives} for an example of such a field~$k$. Now let $E_i, F$ be the fields constructed via Construction \ref{construction} using $k, \psi$. Then by an inductive argument, one can use Lemma \ref{13-dimensional lemma} to show that $\det \psi_{E_i} = 1$, $\ind \psi_{E_i} = 4$, and $\psi_{E_i}$ is anisotropic with splitting pattern $(1, 3, 1, 1)$ for all $i \geq 0$. Since $\psi_{E_i}$ is anisotropic for all $i \geq 0$, it follows that $\psi$ remains anisotropic over $F$. Thus $m(F) = 13$ by Proposition~\ref{possible m-inv}.
\end{example}

\begin{remark}
The choice of $\psi$ in Example \ref{m-inv 13} is different from the choice of $\psi$ in the proof of Theorem \ref{odd m-inv 1}. Indeed, in Example \ref{m-inv 13}, $\ind \psi = 4$ and $\psi$ has splitting pattern $(1, 3, 1, 1)$, while in the proof of Theorem \ref{odd m-inv 1}, $\ind \psi = 32$ and $\psi$ has splitting pattern $(1,1,1,1,1,1)$.
\end{remark}

\begin{example}[Field with $m$-invariant $10$]
\label{m-inv 10}
Let $k$ be a field of characteristic $\ne 2$ over which there exists an anisotropic three-fold Pfister form $\pi$ and an anisotropic two-fold Pfister form~$\tau$ such that $\psi := (\pi \perp -\tau)_{an}$ has dimension~$10$. For example, take $k = k_0(t_1, \ldots, t_5)$ for any field $k_0$ of characteristic $\ne 2$, $\pi = \langle \langle t_1, t_2, t_3 \rangle \rangle$, and $\tau = \langle \langle t_4, t_5 \rangle \rangle$. Let $E_i, F$ be the fields constructed via Construction \ref{construction} using $k, \psi$. We will prove by induction that $\psi$ is anisotropic over $E_i$ for all $i \geq 0$. This will imply that $\psi$ is anisotropic over $F$, hence $m(F) = 10$ by Proposition \ref{possible m-inv}. 

Before proceeding, we make two observations about $\psi$. First, $\psi \in I^2(k)$, thus $\psi_L \in I^2(L)$ for all intermediate field extensions $k \subseteq L \subseteq F$. Next, over $k$, $\ind \psi = \ind \tau = 2$ since $c(\psi) = c(\tau)$ and~$\tau$ is an anisotropic 2-fold Pfister form over $k$. By Lemma \ref{anisotropic over purely transcendental} and \cite[Theorem 1]{function fields},~$\tau$ remains anisotropic over purely transcendental extensions and over function fields of quadratic forms of dimension $\geq 5$, respectively. Therefore $\tau$ is anisotropic over $F$, hence $\ind \psi_F = \ind \tau_F = 2$. This, in turn, implies that $\ind \psi_L = 2$ for all intermediate field extensions $k \subseteq L \subseteq F$.

We now show that $\psi$ is anisotropic over $E_i$ for all $i \geq 0$ by induction. The base case of $i = 0$ is trivial since $E_0 = k$ and $\psi$ is anisotropic over $k$. So suppose $\psi$ is anisotropic over~$E_i$ for some $i \geq 0$. Then $\psi$ is anisotropic over $\widetilde{E}_i$ by Lemma \ref{anisotropic over purely transcendental}. Now let $\widetilde{E}_i \subseteq L \subseteq E_{i+1}$ be any intermediate field extension over which $\psi$ is anisotropic. Then $\psi_L \in I^2(L)$ and $\ind \psi_L = 2$, so for all $a \in E_i^{\times}$, $\psi$ remains anisotropic over $L(\psi \perp \langle -a \rangle)$ by \cite[Theorem~4.1]{izh04}. Since this holds for all $a \in E_i^{\times}$, we conclude that $\psi$ is anisotropic over $E_{i+1}$, completing the proof by induction. 
\end{example}

To conclude this article, in Example \ref{m-invariant from holes} we construct a field with $m$-invariant $2^{n+1} - 2^i$ for each pair of integers $(n, i )$ with $1 \leq i \leq n - 1$. By \cite[Conjecture~1.1]{holes}, these integers $2^{n+1} - 2^i$ are the possible positive dimensions of anisotropic quadratic forms in $I^n(k)$ for $n \geq 2$ that are less than $2^{n+1}$ and not a power of two. The following result is key to this construction.
\begin{lemma}
\label{forms in I^n over function fields}
Let $L$ be a field of characteristic $\ne 2$ over which there is an anisotropic quadratic form $q \in I^n(L)$ with $\dim q = 2^{n+1} - 2^i$ for positive integers $1 \leq i \leq n-1$. Then for all $a \in L^{\times}$, $q$ remains anisotropic over $L' = L(q \perp \langle -a \rangle)$.
\end{lemma}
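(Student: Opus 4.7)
The plan is to case-split on the isotropy of $p := q \perp \langle -a \rangle$ over $L$.

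If $p$ is isotropic over $L$, then $L'$ is purely transcendental over $L$ and Lemma~\ref{anisotropic over purely transcendental} yields the conclusion at once. Assume henceforth that $p$ is anisotropic, and suppose for contradiction that $q$ becomes isotropic over $L'$. By Lemma~\ref{Witt index vs. first Witt index}, $i_W(q_{L'}) = i_1(p) - 1 \geq 1$, and the anisotropic part $(q_{L'})_{an}$, being Witt-equivalent to $q$ over $L'$, lies in $I^n(L')$. Its dimension equals $\dim q - 2(i_1(p)-1)$, which is even, strictly less than $\dim q = 2^{n+1} - 2^i$, and hence strictly less than $2^{n+1}$.

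I now invoke Vishik's ``holes in $I^n$'' theorem: any anisotropic form in $I^n$ of dimension below $2^{n+1}$ has dimension in $\{0\} \cup \{2^{n+1} - 2^j : 1 \leq j \leq n\}$. Matching this with the dimension of $(q_{L'})_{an}$ forces
\[
	i_1(p) \in \{2^{j-1} - 2^{i-1} + 1 : i < j \leq n\} \cup \{2^n - 2^{i-1} + 1\},
\]
so in particular $i_1(p) \geq 2^{i-1} + 1$. A direct computation shows that in each of these cases $\dim p - i_1(p)$ equals $2^{n+1} - 2^{i-1} - 2^{j-1}$ or $2^n - 2^{i-1}$, both of which factor as $2^{i-1}$ times an odd integer, so their 2-adic valuation is exactly $i - 1$.

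To finish, I apply Karpenko's theorem on first Witt indices: for every anisotropic form $\phi$ of dimension $\geq 2$, $i_1(\phi)$ is at most $2$ raised to the 2-adic valuation of $\dim \phi - i_1(\phi)$. In our situation this gives $i_1(p) \leq 2^{i-1}$, contradicting $i_1(p) \geq 2^{i-1} + 1$. Thus $q_{L'}$ must be anisotropic. The main obstacle is recognizing that these two deep results --- Vishik's dimension restriction on anisotropic forms in $I^n$ together with Karpenko's bound on first Witt indices --- combine precisely to exclude every isotropy scenario; once invoked, the remaining 2-adic arithmetic is routine.
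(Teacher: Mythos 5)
Your proof is correct and uses exactly the same two deep inputs as the paper --- Karpenko's theorem on first Witt indices and the ``holes in $I^n$'' theorem --- but applies them in the opposite order. The paper first invokes the first--Witt--index theorem to pin down the possible values of $i_W(q_{L'})$, and then uses the holes theorem to show that the resulting $\dim (q_{L'})_{an}$ would be a forbidden dimension; you instead begin with the holes theorem to enumerate the candidate values of $i_1(p)$, and then show each one violates the first--Witt--index bound $i_1(p) \le 2^{v_2(\dim p - i_1(p))}$. The 2-adic valuation computations check out (in both of your families $v_2(\dim p - i_1(p)) = i-1$ while $i_1(p)\ge 2^{i-1}+1$, since $j>i$ ensures $2^{n+2-i}-1-2^{j-i}$ and $2^{n+1-i}-1$ are odd). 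The two arguments are thus equivalent in substance; yours uses a slightly weaker ``2-adic bound'' formulation of Karpenko's result where the paper uses the ``partial binary sum'' formulation, but either suffices here. One small correction: the holes theorem you invoke is cited in the paper as Karpenko's (\emph{Holes in $I^n$}, Ann. Sci.\ \'Ecole Norm.\ Sup.\ 2004) rather than Vishik's, though Vishik did contribute substantially to that circle of results; you should cite the reference the paper actually uses.
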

\begin{proof}
If $q \perp \langle -a \rangle$ is isotropic, then $L'$ is purely transcendental over $L$ and the result follows from Lemma \ref{anisotropic over purely transcendental}. So to complete the proof it suffices to consider the case when $q \perp \langle -a \rangle$ is anisotropic. 

We have $\dim (q \perp \langle -a \rangle) - 1 = 2^{n+1} - 2^i = \sum_{j = i}^n 2^j$. By \cite[Conjecture~0.1]{kar03}, it follows that $i_1(q \perp \langle -a \rangle) = 1$ or $1 + \sum_{j = i}^{\ell} 2^j$ for some $i \leq \ell \leq n-1$. Lemma \ref{Witt index vs. first Witt index} then implies that $i_W(q_{L'}) = 0$ or $\sum_{j = i}^{\ell} 2^j$.

By contradiction, suppose $i_W(q_{L'}) = \sum_{j=i}^{\ell} 2^j$. Then we have
\[
	q_{L'} \simeq \left(\sum_{j = i}^{\ell} 2^j\right) \mathbb{H} \perp q'
\]
for some anisotropic quadratic form $q'$ over $L'$ with $\dim q' = 2^{n+1} - \left(2^{\ell + 2} - 2^i\right)$. Since $q \in I^n(L)$, it follows that $q_{L'} \in I^n(L')$ as well, hence $q' \in I^n(L')$. Now, since $q' \in I^n(L')$ is anisotropic with $\dim q' < 2^{n+1}$, we must have $\dim q' = 2^{n+1} - 2^r$ for some $1 \leq r \leq n$ \cite[Conjecture~1.1]{holes}. We saw above that $\dim q' = 2^{n+1} - (2^{\ell + 2} - 2^i)$ for some $1 \leq i \leq \ell \leq n-1$, and one can check that $2^{\ell + 2} - 2^i \neq 2^r$ for any $1 \leq r \leq n$. We have reached a contradiction, and thus conclude that $q$ is anisotropic over $L'$.
\end{proof}

\begin{example}[Field with $m$-invariant $2^{n+1} - 2^i$ for $1 \leq i \leq n - 1$]
\label{m-invariant from holes} Let $(n, i)$ be any pair of integers such that $n \geq 2$ and $1 \leq i \leq n - 1$ and let $k$ be a field of characteristic $\ne 2$ over which there is an anisotropic quadratic form $\psi \in I^n(k)$ with $\dim \psi = 2^{n+1} - 2^i$ (see, e.g., \cite[Section 7]{holes} for such fields~$k$). Let~$F$ be the field constructed via Construction~\ref{construction} using $k, \psi$. For all intermediate field extensions $k \subseteq L \subseteq F$, the form $\psi_L$ belongs to $I^n(L)$. Furthermore, if~$\psi$ is anisotropic over $L$, then $\psi$ remains anisotropic over all purely transcendental extensions of~$L$ by Lemma \ref{anisotropic over purely transcendental}, and $\psi$ remains anisotropic over $L(\psi \perp \langle -a \rangle)$ for all $a \in L^{\times}$ by Lemma~\ref{forms in I^n over function fields}. Applying Lemmas~\ref{anisotropic over purely transcendental} and \ref{forms in I^n over function fields} repeatedly, we conclude that $\psi$ is anisotropic over $F$. Therefore $m(F) = 2^{n+1} - 2^i$ by Proposition \ref{possible m-inv}.
\end{example}

\begin{remark}
The choice of $\psi$ in Examples \ref{m-inv 10} and \ref{m-invariant from holes} differs from the choice of $\psi$ in the proof of Theorem \ref{any even m-inv}. Namely, in Example \ref{m-inv 10}, $\ind \psi = 2$, and in Example \ref{m-invariant from holes}, $\ind \psi = 1$ when $n \geq 3$. However, in the proof of Theorem \ref{any even m-inv}, $\ind \psi = 2^{s-1} > 2$, where $\dim \psi = 2s$.
\end{remark}

\begin{remark}
We do not know the $u$-invariants of the fields $F$ constructed above with $m(F) > 1$. If $n \geq 1$ is the smallest positive integer such that $m(F) \leq 2^n$, then $u(F) \geq 2^n$ by \cite[Corollary~1.6]{m-inv}, but there is the question of whether it is possible to have $u(F) > 2^n$. However, we can slightly alter Construction~\ref{construction} to construct $F$ in a way that guarantees $u(F) = 2^n$, with $n$ as above. Indeed, let $k$ be a field of characteristic $\ne 2$ over which there is an anisotropic quadratic form $\psi$ with $2^{n-1} < \dim \psi \leq 2^n$ for some $n \geq 1$. We inductively define fields $E_j$ for $j \geq 0$ as follows. Let $\widetilde{E}_j = E_j(X_j)$ for an indeterminate $X_j$. Let
\begin{align*}
	E_0 &= k \\ 
	E_{2i+1} &= \widetilde{E}_{2i}\left(\{\tau \in W (E_{2i}) \mid \dim \tau = 2^n + 1\}\right) \text{ for $i \geq 0$} \\
	E_{2i} &= \widetilde{E}_{2i-1}\left(\{\psi \perp \langle -a \rangle \mid a \in E_{2i-1}^{\times}\}\right) \text{ for $i \geq 1$}.
\end{align*}
Let $F = \bigcup_{j = 0}^{\infty} E_j$. Any quadratic form of dimension $2^n + 1$ over $F$ is defined over $E_{2i}$ for some $i \geq 0$, and thus is isotropic over $E_{2i+1}$, hence isotropic over $F$. Therefore $u(F) \leq 2^n$. If~$\rho$ is an anisotropic quadratic form with $\dim \rho \leq 2^n$ over $E_{2i}$ for some $i \geq 0$, then by Lemma~\ref{anisotropic over purely transcendental} and \cite[Theorem~1]{function fields},~$\rho$ remains anisotropic over $E_{2i+1}$. Furthermore, if $\ind \rho < 2^{2^{n-1}}$ over $E_{2i}$ for some $i \geq 0$, then $\ind \rho_{E_{2i+1}} = \ind \rho$ by \cite[Corollary~30.9]{ekm}. So if $k$ and~$\psi$ are selected as in the proofs of Theorems \ref{any even m-inv}, \ref{odd m-inv 1}, and \ref{odd m-inv 2} (as well as in the examples in Section \ref{main results}), then these observations together with the arguments in those proofs show that~$\psi$ is anisotropic over~$E_j$ for all $j \geq 0$, hence anisotropic over~$F$. The form~$\psi$ is universal over $F$ by construction, thus $m(F) \leq \dim \psi$. Moreover, combining the argument in the proof of Proposition \ref{possible m-inv} with the above observation that anisotropic quadratic forms of dimension~$\leq 2^n$ over~$E_{2i}$ remain anisotropic over~$E_{2i+1}$ shows that there are no anisotropic universal quadratic forms over $F$ with dimension $< \dim \psi$. Thus $m(F) = \dim \psi$, which in turn implies that $u(F) \geq 2^n$, hence $u(F) = 2^n$.
\end{remark}

\begin{ack} 
The author would like to thank David Harbater for helpful comments and suggestions and Stephen Scully for helpful discussions that substantially strengthened the above results.
\end{ack}

\providecommand{\bysame}{\leavevmode\hbox to3em{\hrulefill}\thinspace}
\providecommand{\href}[2]{#2}

\medskip

\noindent{\bf Author Information:}\\

\noindent Connor Cassady\\
Department of Mathematics, The Ohio State University, Columbus, OH 43210-1174, USA\\
email: cassady.82@osu.edu
\end{document}